\newtheorem{theorem}{Theorem}[section]
\newtheorem{lemma}[theorem]{Lemma}
\newtheorem{mydef}[theorem]{Definition}
\newtheorem{remark}[theorem]{Remark}
\newtheorem{hyp}[theorem]{Hypothesis}
\theoremstyle{remark}
\numberwithin{equation}{section}
\begin{document}

\begin{frontmatter}
\title{The Instability of the Hocking-Stewartson Pulse and its Geometric Phase in the Hopf Bundle}
\author[unc]{Colin Grudzien\corref{cor1}} 
\ead{cgrudz@email.unc.edu}
\address[unc]{Department of Mathematics University of North Carolina at Chapel Hill, Phillips Hall, CB3250 UNC-CH, Chapel Hill, NC 27599-3250}

\cortext[cor1]{Corresponding author}
\begin{abstract}
This work demonstrates an innovative numerical method for counting and locating eigenvalues with the Evans function.  Utilizing the geometric phase in the Hopf bundle, the technique calculates the winding of the Evans function about a contour in the spectral plane, describing the eigenvalues enclosed by the contour for the Hocking-Stewartson pulse of the complex Ginzburg-Landau equation.  Locating eigenvalues with the geometric phase in the Hopf bundle was proposed by Way \cite{WAY2009}, and proven by Grudzien, Bridges \& Jones \cite{Grudz2015}.  Way demonstrated his proposed method for the Hocking-Stewartson pulse, and this manuscript redevelops this example as in the proof of the method in \cite{Grudz2015}, modifying his numerical shooting argument, and introduces new numerical results concerning the phase transition. 
\end{abstract}
\begin{keyword} 
stability analysis, Hocking-Stewartson pulse, steady states, geometric dynamics, Evans function
\end{keyword}
\end{frontmatter}

\section{Introduction}
\label{section:intro}
Way in his PhD thesis \cite{WAY2009}, supervised by Bridges at the University of Surrey, developed numerical results which supported the hypothesis that parallel transport in the Hopf bundle could locate and measure the multiplicity of of eigenvalues for linearizations of reaction-diffusion, differential operators.  The eigenvalue problem for the operator linearized about the steady state gives rise to a dynamical system on $\mathbb{C}^n$.  The Hopf bundle is represented as $S^{2n-1}\subset \mathbb{C}^n$ over the base space $\mathbb{C}P^{n-1}$, and thus has a realization in the phase space for an arbitrary dynamical system.  By projecting $\lambda$ dependent special solutions in $\mathbb{C}^n$ onto $S^{2n-1}$ the dynamics induce parallel transport in the Hopf bundle. In the fiber $S^1$ the parallel transport gives a winding number that, relative to a reference phase for the contour, counts the multiplicity of eigenvalues enclosed by the $\lambda$ parameter, as proven by Grudzien, Bridges \& Jones \cite{Grudz2015}.  Way demonstrated his method for the Hocking-Stewartson pulse of the complex Ginzburg-Landau equation, utilizing the exterior algebra formulation as Afendikov \& Bridges did for the Evans function \cite{BRI01}.  This work re-examines this example and the method---\S \ref{section:detbundle} will re-develop this example as in the proof of the method by Grudzien, Bridges \& Jones \cite{Grudz2015}, and with the example framed in this context, \S \ref{section:numerics} introduces new numerical results concerning the phase transition.  The numerical example highlights two important features that differentiates the method of geometric phase from other Evans function formulations; namely, the geometric phase method it utilizes \textit{either} the stable or unstable manifold for the computation of the eigenvalues, and the computation of the eigenvalue is continuous in the wave parameter, yielding a continuous accumulation of the eigenvalue driven by the system dynamics.
\section{The Hocking-Stewartson Pulse of the Complex Ginzburg-Landau Equation}
\label{section:detbundle}

The scaled, complex Ginzburg-Landau equation is given by
\begin{align}\label{eq:cgl}
\rho e^{i \psi} Y_t =Y_{xx} - (1+i \omega)^2 Y +(1+i \omega)(2+i \omega)\mid Y \mid^2 Y
\end{align} 
where $\rho>0$, $\psi$ and $\omega$ are specified real parameters for the system.  
The Hocking-Stewartson pulse is the steady state solution for the Complex Ginzburg-Landau equation, given by
\begin{align}\label{eq:HSpulse}
Y(x,t) = \left(cosh(x)\right)^{-1-i\omega}
\end{align}
Bates \& Jones \cite{Bates89} prove that stability of a steady state can demonstrated by stability of the linearization of equation (\ref{eq:cgl}), therefore, consider the linearization about the pulse.

\begin{mydef}\label{mydef:L}
Let $\mathcal{L}$ be the linearization of the operator defining equation (\ref{eq:cgl}) about the Hocking-Stewartson pulse (\ref{eq:HSpulse}).  
\end{mydef}

Considering solutions proportional to $e^{\lambda t}$, one can derive a non-autonomous system on $\mathbb{C}^4$ with asymptotic limits in $x$, as done by Afendikov \& Bridges \cite{BRI01}.  The system will be of the form
\begin{equation}\label{eq:C4} \begin{matrix}
v' = A(x,\lambda) v & v \in \mathbb{C}^4 \\ \\
 \lim_{x \rightarrow \pm \infty} A(x,\lambda) = A_{\pm\infty}(\lambda) & ' = \frac{d}{dx}
\end{matrix}\end{equation}
and it is equivalent to the eigenvalue problem in the following sense.  
\begin{lemma} 
A solution to the $\lambda$ dependent system on $\mathbb{C}^4$ is continuous and bounded if and only if it is an eigenfunction and $\lambda$ is an associated eigenvalue.
\end{lemma}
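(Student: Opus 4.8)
The plan is to prove the equivalence by translating "continuous and bounded solution of \eqref{eq:C4}" into the two defining properties of an eigenfunction of $\mathcal{L}$: that it lies in the appropriate function space (decaying at $\pm\infty$) and that it solves $\mathcal{L}u = \lambda u$ in the classical/weak sense. First I would record the change of variables that produces \eqref{eq:C4}: writing $Y = Y_{HS} + u$, substituting the ansatz $u \propto e^{\lambda t}$ into the linearization $\mathcal{L}$, and recasting the resulting second-order system for $(u,\bar u)$ on $\mathbb{C}^2$ as the first-order system $v' = A(x,\lambda)v$ on $\mathbb{C}^4$ with $v = (u, u_x, \bar u, \bar u_x)^T$ (or the real/imaginary-part analogue). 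This makes the correspondence between solutions of \eqref{eq:C4} and (distributional) solutions of the ODE $\mathcal{L}u = \lambda u$ a bijection, since the coefficients of $\mathcal{L}$ are smooth (the potential terms built from $\operatorname{sech}(x)$ are real-analytic and bounded), so any $L^2$ or bounded weak solution is automatically smooth by elliptic regularity; I would state this regularity step as a short remark rather than belabor it.

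The substantive content is then the boundedness-versus-decay dichotomy, and this is where I would spend the effort. The key step is to analyze the constant-coefficient asymptotic systems $v' = A_{\pm\infty}(\lambda)v$: compute the eigenvalues $\mu_j(\lambda)$ of $A_{\pm\infty}(\lambda)$ and observe that, for $\lambda$ in the relevant region (to the right of the essential spectrum of $\mathcal{L}$), none of them is purely imaginary — they split into a stable set $\{\operatorname{Re}\mu_j < 0\}$ and an unstable set $\{\operatorname{Re}\mu_j > 0\}$, with the $x \to +\infty$ and $x \to -\infty$ problems being mirror images by the evenness of $\operatorname{sech}$. By the standard asymptotic (Levinson/Hartman) theory for systems with exponentially small perturbations of a hyperbolic constant matrix, every solution of \eqref{eq:C4} either decays exponentially or grows exponentially as $x \to +\infty$, and likewise as $x \to -\infty$; in particular a solution is \emph{bounded} on a half-line if and only if it \emph{decays exponentially} there, i.e. lies in the stable subspace at $+\infty$ and the unstable subspace at $-\infty$. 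A solution that is bounded (hence decaying) on all of $\mathbb{R}$ therefore lies in $H^2(\mathbb{R})$, which is precisely the domain condition for an eigenfunction of $\mathcal{L}$; conversely an eigenfunction is by definition in that space, hence bounded and (being smooth) continuous. Assembling these two implications with the bijection from the first paragraph gives the lemma.

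I expect the main obstacle to be handling the spectral geometry uniformly — making precise for which $\lambda$ the matrices $A_{\pm\infty}(\lambda)$ are hyperbolic, so that the Levinson argument applies and "bounded $\Leftrightarrow$ decaying" holds without exception. Concretely one must check that the branch points and the continuous (essential) spectrum coming from $A_{\pm\infty}$ are exactly the $\lambda$ for which some $\mu_j(\lambda)$ crosses the imaginary axis, and argue that the lemma is being asserted for $\lambda$ off this locus (or, more carefully, that on the essential spectrum there are no bounded-but-not-decaying solutions either, because the oscillatory modes are still not in $L^\infty$ unless accompanied by the right coupling). A secondary technical point is that $\mathcal{L}$ and its linearized ODE are non-self-adjoint and the four-dimensional system couples $u$ with $\bar u$, so one must be careful that the "eigenfunction" notion in Definition~\ref{mydef:L} is the one for which membership in $H^2$ (equivalently, boundedness) is the complete spectral condition — this is where the cited work of Bates \& Jones \cite{Bates89} and the reduction of \S\ref{section:detbundle} are invoked to legitimize passing between the PDE eigenvalue problem and \eqref{eq:C4}.
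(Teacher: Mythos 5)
The paper does not prove this lemma itself; it simply cites Alexander, Gardner \& Jones \cite{AGJ1990}, and your sketch is essentially a reconstruction of the argument given there: hyperbolicity of $A_{\pm\infty}(\lambda)$ off the essential spectrum, the resulting exponential dichotomy forcing every bounded solution of the first-order system to decay exponentially, and the identification of decaying solutions with $L^2$ (equivalently $H^2$) eigenfunctions of $\mathcal{L}$. Your caveat that the equivalence requires $\lambda$ to lie off the essential spectrum --- i.e.\ where the system splits in the sense of Definition~\ref{mydef:splitting} --- is the right one to flag, and is implicit in the paper's later restriction to the domain $\Omega$.
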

\begin{proof}
This is proven by Alexander, Gardner \& Jones \cite{AGJ1990} and the reader is referred there for the proof.
\end{proof}
\begin{remark}
The linearization of the complex Ginzburg-Landau equation about the pulse, $\mathcal{L}$ in definition \ref{mydef:L}, has essential spectrum on the set
\begin{align}
S_{ess} =\{\rho^{-1} e^{\mp i \psi} (\omega - s^2 -1)\mp 2 i \rho^{-1}\omega e^{\mp i \psi}, s \in \mathbb{R}^+\}
\end{align}
and for the parameter values $\omega=3$, $\rho = \frac{1}{\sqrt{5}}$, and $\psi = \arctan(2)$ there is a known double eigenvalue at $\lambda=0$, and simple eigenvalues at approximately $\lambda=-6.6357$ and $\lambda=15$ estimated by Afendikov \& Bridges \cite{BRI01}.
\end{remark}  
\begin{mydef}\label{mydef:splitting}
Assume $\Omega\subset \mathbb{C}$ is open, simply connected and contains only discrete eigenvalues of $\mathcal{L}$. System (\ref{eq:C4}) is said to \textbf{split} in $\Omega$ if $A_ {\pm \infty}(\lambda)$ are hyperbolic and each have exactly $k$ eigenvalues of positive real part (\textbf{unstable eigenvalues}) and $n-k$ eigenvalues of negative real part (\textbf{stable eigenvalues}), including multiplicity, for every $\lambda \in \Omega$. 
\end{mydef}
\begin{lemma}
For the parameter values $\omega=3$, $\rho=\frac{1}{\sqrt{5}}$, and $\psi = \arctan(2)$ system (\ref{eq:C4}) splits on the domain $\{\lambda\in\mathbb{C} : Re(\lambda)>0\}$.  Moreover, $\Omega \subset \mathbb{C}$ can be chosen such that $\{\lambda\in\mathbb{C} : Re(\lambda)>0\} \subset \Omega$ and $-6.6357 \in \Omega$.
\end{lemma}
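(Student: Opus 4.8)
The plan is to reduce the whole statement to an analysis of the constant asymptotic matrix. Because the pulse (\ref{eq:HSpulse}) satisfies $Y(x,t)\to 0$ as $x\to\pm\infty$, the two asymptotic matrices coincide, $A_{+\infty}(\lambda)=A_{-\infty}(\lambda)=:A_\infty(\lambda)$, and the asymptotic linearization is just the linearization of (\ref{eq:cgl}) about $Y=0$ together with its complex conjugate. As in the set-up of Afendikov \& Bridges \cite{BRI01}, this decouples the asymptotic $\mathbb{C}^4$ system into two constant-coefficient second-order blocks, so the spectrum of $A_\infty(\lambda)$ is $\{\pm\mu_1(\lambda),\pm\mu_2(\lambda)\}$ with $\mu_1^2(\lambda)=(1+i\omega)^2+\rho e^{i\psi}\lambda$ and $\mu_2^2(\lambda)=(1-i\omega)^2+\rho e^{-i\psi}\lambda$, each an invertible affine function of $\lambda$. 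I would then record two consequences. First, $A_\infty(\lambda)$ is hyperbolic precisely when neither $\mu_1^2(\lambda)$ nor $\mu_2^2(\lambda)$ is a non-positive real number, and this non-hyperbolicity set is exactly the essential spectrum $S_{ess}$ of the Remark. Second, when $A_\infty(\lambda)$ is hyperbolic each of $\mu_1^2(\lambda),\mu_2^2(\lambda)$ has exactly one square root in the open right half-plane and one in the open left, so $A_\infty(\lambda)$ has exactly $k=2$ unstable and $n-k=2$ stable eigenvalues, counting multiplicity. Hence System (\ref{eq:C4}) splits with $k=2$ on \emph{any} open, simply connected $\Omega$ with $\Omega\cap S_{ess}=\emptyset$; the requirement in Definition \ref{mydef:splitting} that $\Omega$ contain only discrete eigenvalues of $\mathcal{L}$ is automatic in that case, since off the essential spectrum the spectrum of $\mathcal{L}$ is point spectrum, discrete with finite multiplicities (Alexander, Gardner \& Jones \cite{AGJ1990}).

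It remains to locate $S_{ess}$ for $\omega=3$, $\rho=\tfrac{1}{\sqrt5}$, $\psi=\arctan 2$ and to produce the required domains. Since $\cos\psi=\tfrac1{\sqrt5}$ and $\sin\psi=\tfrac2{\sqrt5}$ one has $\rho^{-1}e^{-i\psi}=1-2i$ and $\rho e^{i\psi}=\tfrac15(1+2i)$; a short computation of the sets $\{\lambda:\mu_1^2(\lambda)\le 0\}$ and $\{\lambda:\mu_2^2(\lambda)\le 0\}$ then exhibits $S_{ess}$ as a pair of complex-conjugate rays, each with real part $-4-s^2$ ($s\ge 0$), meeting the real axis only at $\lambda=-15$. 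In particular $S_{ess}\subset\{\mathrm{Re}\,\lambda\le -4\}$, so $\{\mathrm{Re}\,\lambda>0\}$ is open, convex, and disjoint from $S_{ess}$; by the previous paragraph System (\ref{eq:C4}) splits there with $k=2$, which is the first assertion.

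For the second assertion I would examine the global topology of $S_{ess}$. It is the union of two analytic arcs -- complex conjugates of one another -- each running from a finite endpoint ($-4\mp 22i$) out to $\infty$ and crossing transversally at the single interior point $\lambda=-15$; topologically it is a cross two of whose arms are bounded and two of which escape to infinity. Consequently $\mathbb{C}\setminus S_{ess}$ has exactly two connected components, and since $S_{ess}\cup\{\infty\}$ is a connected compact subset of the Riemann sphere each component is simply connected. Let $R$ be the component containing $\{\mathrm{Re}\,\lambda>0\}$ -- that half-plane lies in a single component, being connected and disjoint from $S_{ess}$. Because $S_{ess}\cap\mathbb{R}=\{-15\}$, the open real ray $(-15,\infty)$ also lies in $R$, hence $-6.6357\in R$. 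Taking $\Omega:=R$ meets every requirement: it is open, simply connected, disjoint from $S_{ess}$ (so $\mathcal{L}$ has only discrete spectrum on it), contains $\{\mathrm{Re}\,\lambda>0\}$, and contains $-6.6357$; and by the first paragraph System (\ref{eq:C4}) splits on it with $k=2$.

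The routine ingredients are the decoupling of the asymptotic system and the affine form of $\mu_1^2,\mu_2^2$ (already in \cite{BRI01}) and the observation that the number of unstable eigenvalues of $A_\infty$ is automatically $2$ wherever it is hyperbolic, which is immediate from the square-root structure. The step requiring care -- the main obstacle -- is the global geometry: verifying that the two essential-spectrum rays intersect only at $\lambda=-15$ and remain in a left half-plane, so that $\mathbb{C}\setminus S_{ess}$ has a single simply connected component capturing both the open right half-plane and the point $-6.6357$. This comes down to pinning the endpoints and the real-axis crossing of the rays, which the parameter substitution turns into a short explicit computation.
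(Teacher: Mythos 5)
Your argument is correct and follows essentially the same route as the paper: reduce to the constant asymptotic matrix, use the $2$--$2$ splitting of its spectrum off the essential spectrum, and check that for these parameter values the essential spectrum avoids both the open right half-plane and $\lambda=-6.6357$. The paper's proof simply cites Afendikov \& Bridges \cite{BRI01} for these facts, whereas you carry out the verification explicitly --- and your parametrization of $S_{ess}$, with endpoints $-4\mp 22i$ and a single real-axis crossing at $\lambda=-15$, is the one consistent with the dispersion relation of the linearized equation (the factor $(\omega-s^2-1)$ in the paper's displayed $S_{ess}$ appears to be a typo for $(\omega^2-s^2-1)$; both versions cross the real axis only at $-15$, so the conclusion is unaffected).
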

\begin{proof}
Afendikov \& Bridges \cite{BRI01} demonstrate that the autonomous limits, $A_{\pm\infty}(\lambda)$, each have exactly 2 stable and unstable eigenvalues respectively, for each $\lambda$ such that $Re(\lambda)>0$, and in general for $\lambda \notin S_{ess}$.  For $\omega=3$, $\rho=\frac{1}{\sqrt{5}}$, and $\psi = \arctan(2)$, the essential spectrum is a curve in $\mathbb{C}$ that does not intersect $-6.6357 $; therefore an open $\Omega \subset \mathbb{C}$ can be chosen containing $\lambda=-6.6357$ without intersecting the essential spectrum, and for such an $\Omega$, system (\ref{eq:C4}) splits on the domain.  
\end{proof}
\begin{remark}
With the splitting condition satisfied, one may construct the Evans function as done by Alexander, Gardner \& Jones \cite{AGJ1990}---the method of geometric phase was proven by Grudzien, Bridges \& Jones \cite{Grudz2015} under these hypotheses for the Evans function. 
\end{remark}

In order to capture the winding of the unstable manifold of the asymptotic system $A_{-\infty}(\lambda)$, the compound matrix method is utilized, defining a dynamical system on the exterior algebra $\Lambda^2(\mathbb{C}^4)$.

\begin{mydef}
Let the matrix $A(\lambda,x)$ define dynamical system of the form (\ref{eq:C4}) on $\mathbb{C}^4$.  If $\{z_j\}^2_1$ are vectors in $\mathbb{C}^4$, the corresponding $A^{(2)}$ system on $\Lambda^2(\mathbb{C}^4)\equiv \mathbb{C}^{4\choose 2} \equiv \mathbb{C}^6$ is defined:
\begin{align}\label{eq:A^ksystem}
Z'=A^{(2)}(\lambda, x) Z =& \frac{d}{dx}(z_1 \wedge z_2) \\ 
                         = & A(\lambda,x)z_1 \wedge z_2 + z_1\wedge A(\lambda,x) z_2\\ 
 A^{(2)}_{\pm\infty}(\lambda) =& \lim_{x \rightarrow \pm\infty} A^{(2)}(\lambda,x)
\end{align}
\end{mydef}
\begin{remark}
The system (\ref{eq:A^ksystem}) yields coordinates for the evolution of the two dimensional subspaces of $\mathbb{C}^4$, which allows one to consider the evolution of the unstable manifold for the asymptotic system $A_{-\infty}(\lambda)$, and particularly the winding it accumulates both in the sense of the relative geometric phase considered by Grudzien, Bridges \& Jones \cite{Grudz2015} and the Chern number of the determinant bundle of the unstable manifold, constructed by Alexander, Gardner \& Jones \cite{AGJ1990}.  By way of the proof in \cite{Grudz2015}, these winding formulations are seen to be equivalent for special solutions which allow trivializations of the determinant bundle.  
\end{remark}
Explicitly, Afendikov \& Bridges derive the compound matrix system 
\begin{equation}\label{eq:A^2system} \begin{matrix}
u_x = A(\lambda, x)u & x\in \mathbb{R} & \lambda \in \mathbb{C} & u\in \mathbb{C}^6 \\ \\

A(\lambda) &=& 
\begin{pmatrix} 
0 & 0 & 1 & -1 & 0 & 0 \\
a_{32} & 0 & 0 & 0 & 0 & 0 \\
a_{42} & 0 & 0 & 0 & 0 & 1 \\
-a_{31} & 0 & 0 & 0 & 0 & -1 \\
-a_{41} & 0 & 0 & 0 & 0 & 0 \\
0 & -a_{41} & a_{31} & -a_{42} & a_{32} & 0 
\end{pmatrix}
\\ \\
\end{matrix} \end{equation}
with components defined
\begin{align*}
a_{31}=&\lambda \rho \cos(\psi) + 1- \omega^2 - (2 - \omega^2)(\hat{q}_2^2 +3 \hat{q}_1^2) + 6 \omega \hat{q}_1 \hat{q}_2 \\
a_{32}=& - \lambda \rho \sin(\psi) -2 \omega - 2(2- \omega)\hat{q}_1\hat{q}_2 + 3 \omega (\hat{q}_1^2 +3 \hat{q}^2_2) \\
a_{41}=&\lambda \rho \sin(\psi) + 2 \omega -2(2- \omega)\hat{q}_1\hat{q}_2 -3\omega(3\hat{q}_1^2 - \hat{q}_2^2)\\
a_{42} =&\lambda \rho cos(\psi) +1- \omega^2 - (2 - \omega^2)(\hat{q}_1^2 +3 \hat{q}_2^2) - 6 \omega \hat{q}_1 \hat{q}_2
\end{align*}
and $\hat{q}_1,\hat{q}_2$ derived from the expression for the pulse in $\mathbb{C}^4$, where
\begin{align*}
\hat{q}_1 =& \frac{\cos\left(\omega\log(\cosh(x))\right)}{\cosh(x)} \\
\hat{q}_2 =& \frac{-\sin\left(\omega\log(\cosh(x))\right)}{\cosh(x)}
\end{align*}
The non-autonomous system (\ref{eq:A^2system}) has the symmetric asymptotic limits
\begin{equation}\label{eq:A^2infty} \begin{matrix}
A_\infty(\lambda) = \text{lim}_{x \rightarrow \pm \infty} A(\lambda,x) &=&
\begin{pmatrix}
0 & 0 & 1 & -1 & 0 & 0 \\
-p(\lambda) & 0 & 0 & 0 & 0 & 0 \\
\eta(\lambda) & 0 & 0 & 0 & 0 & 1 \\
-\eta(\lambda) & 0 & 0 & 0 & 0 & -1 \\
-p(\lambda) & 0 & 0 & 0 & 0 & 0 \\
0 & -p(\lambda) & \eta(\lambda) & -\eta(\lambda) & -p(\lambda) & 0 
\end{pmatrix}
\end{matrix} \end{equation}
where the parameters are defined
\begin{align}
p(\lambda) & = 2\omega + \lambda\rho\sin(\psi) \\
\eta(\lambda) &= 1 - \omega^2 + \lambda \rho \cos(\psi)
\end{align}

The method of geometric phase makes use of a loop of eigenvectors, chosen analytically in the parameter $\lambda$, that correspond to the dominant unstable eigenvalue for the limiting systems at $\pm \infty$.  The method is to measure the relative winding the loop at $-\infty$ accumulates through its evolution as it meets a corresponding loop of eigenvectors at $+\infty$.  An algorithm for constructing these loops of eigenvectors is given by Humpherys, Standstede \& Zumbrun \cite{hump06}.  However, for the asymptotic system (\ref{eq:A^2infty}) such bases can be constructed explicitly.  The unique eigenvalues of most positive and most negative real part for system (\ref{eq:A^2infty}) are given by $\sigma^+,\sigma^-$ respectively, and have associated eigenvectors
\begin{align}\label{eq:ueigenvector}
&\begin{Bmatrix} 
\sigma^+ = \sqrt{2}\sqrt{\eta + \sqrt{\eta^2 + p^2}} ,&  \hspace{2mm}& X^+(\lambda) =
\begin{pmatrix} 2 \sigma^+ \\ -2p \\ (\sigma^+)^2\\ -(\sigma^+)^2 \\ -2p \\ \sigma^+\big((\sigma^+)^2 -2 \eta\big) \end{pmatrix} 
\end{Bmatrix}\\
&\begin{Bmatrix}
\sigma^- = -\sqrt{2}\sqrt{\eta + \sqrt{\eta^2 + p^2}} ,& & X^-(\lambda) =
\begin{pmatrix} 2 \sigma^- \\ -2p \\ (\sigma^-)^2 \\ -(\sigma^-)^2
\\-2p \\ \sigma^-\big((\sigma^-)^2 -2 \eta\big) \end{pmatrix}\end{Bmatrix}.
\end{align}
These eigenvectors correspond to the Grassmann coordinates for the un/stable subspace of the asymptotic system on $\mathbb{C}^4$.  
\begin{remark}
For $x<0$ and $\mid x\mid$ taken sufficiently large, the $\lambda$ dependent initial conditions defined by (\ref{eq:ueigenvector}) approximate the unstable manifold of the fixed point $0$ for the asymptotic system, and can be used for initial conditions for the numerical method.
\end{remark}
\section{The Method of Geometric Phase}
The strategy for determining the eigenvalues of the linear operator $\mathcal{L}$ with the Evans function is to choose a contour $K$ in the complex plane which does not intersect the spectrum of $\mathcal{L}$ and to calculate the winding of the Evans function around $K$.  The same strategy applies with the geometric phase formulation calculating the winding of the Evans function in the winding in the fibers of the Hopf bundle.  This section develops the method of geometric phase with $\mathcal{L}$, the linearization of the complex Ginzburg-Landau equation about the Hocking-Stewartson pulse.

\begin{hyp}\label{hyp:K}  Assume that the contour $K$ is a smooth, simple closed curve in $\Omega \subset \mathbb{C}$ such that there is no spectrum of $\mathcal{L}$ in $K$.  Let $K^\circ$ be the region enclosed by $K$---assume $K^\circ$ is homeomorphic to the disk $D\subset\mathbb{R}^2$ and that $K$ is parametrized by $\lambda(s):[0,1]\hookrightarrow K$ with standard orientation.
\end{hyp}

The Hopf bundle $S^{11}$ is realized within the phase space of the exterior system on $\mathbb{C}^4$, $\Lambda^2(\mathbb{C}^4)\equiv \mathbb{C}^6$, by spherical projection; in general the generic Hopf bundle $S^{2n-1}$ is realized similarly in $\mathbb{C}^n$.  

\begin{mydef}
The Hopf bundle is a principal fiber bundle with full space $S^{2n-1}$, base space $\mathbb{C}P^{n-1}$, and fiber $S^1$. The fiber $S^1$ acts naturally on $S^{2n-1}$ by scalar multiplication by the associated element in the unitary group $U(1)$, and with respect to this action, the quotient is $\mathbb{C}P^{n-1}$.  The spaces are related by the diagram
\begin{equation}\begin{CD}
S^1 @>>> S^{2n-1}\\
& & @VVV \pi \\
& & \mathbb{C}P^{n-1}
\end{CD}\end{equation}  
where $\pi$ is the quotient map induced by the group action.
\end{mydef}
With respect to this coordinate realization of the Hopf bundle in the phase space $\mathbb{C}^n$, one may define a natural choice of connection.  A connection describes the motion of a path within the fibers of the bundle---it is possible to define a connection via a connection 1-form, as below.  For a full discussion on the theory of principal fiber bundles and connections the reader is referred to Kobayashi \& Nomizu \cite{kobayashi1996}.  

\begin{mydef}
For the Hopf bundle $S^{2n-1}$, viewed in coordinates in $\mathbb{C}^n$, define the connection 1-form $\omega$ pointwise for $p\in S^{2n-1}$ as a mapping of the tangent space of the Hopf bundle $T_p(S^{2n-1})\subset T_p(\mathbb{C}^n)$
\begin{equation} \begin{matrix}
\omega_p : & T_p(S^{2n-1}) & \rightarrow & i\mathbb{R} \cong \mathcal{G} \\ \\
 & V_p & \mapsto & \langle V_p, p \rangle_{\mathbb{C}^n} \\
\end{matrix} \end{equation}
where $\mathcal{G}$ is the Lie algebra of the fiber $S^1$ \cite{WAY2009}.  Defining a connection 1-form defines a connection and $\omega$ will be denoted the \textbf{natural connection} on the Hopf bundle.
\end{mydef}

\begin{remark}
The natural connection is a connection of the generic Hopf bundle $S^{2n-1}$ and it is the unique connection for $S^3$.
This is proven by Way \cite{WAY2009} in \S 3.5 and the reader is referred there for a full discussion.
\end{remark}
The realization of the Hopf bundle, and its connection, in coordinates for $\mathbb{C}^n$ also yields an alternative formulation for a non-zero path in $\mathbb{C}^n$, implicitly describing the connection of its spherical projection.  
\begin{lemma}
Let $u(t):[0,1]\rightarrow \mathbb{C}^n$ be a non-zero path, and $\hat{u}(t)$ be its the spherical projection into $S^{2n-1}$. The natural connection of the tangent vector of $\hat{u}(t)$ is identically
\begin{align}\label{eq:connection}
\omega\Big(\frac{d}{dt} \hat{u}(t)\Big)&=  i \frac{Im\big(\langle u'(t), u(t) \rangle\big)}{(\langle u(t),u(t) \rangle)}
\end{align}
\end{lemma}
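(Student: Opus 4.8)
The plan is to compute the spherical projection explicitly and then push the derivative through. Write $r(t) = \sqrt{\langle u(t), u(t)\rangle}$, which is real, positive and smooth since $u$ is non-zero, and set $\hat u(t) = u(t)/r(t)$, so that $\hat u(t) \in S^{2n-1}$. The first step is to differentiate: by the product (quotient) rule,
\begin{equation}\label{eq:uhatprime}
\frac{d}{dt}\hat u(t) = \frac{u'(t)}{r(t)} - \frac{r'(t)}{r(t)^2} u(t).
\end{equation}
From $r^2 = \langle u, u\rangle$ one gets $2 r r' = \frac{d}{dt}\langle u,u\rangle = 2\,\mathrm{Re}\,\langle u', u\rangle$ (using that $\langle\cdot,\cdot\rangle$ is a Hermitian inner product, so $\langle u, u'\rangle = \overline{\langle u', u\rangle}$ and the sum of the two cross terms is $2\,\mathrm{Re}\,\langle u',u\rangle$), hence $r' = \mathrm{Re}\,\langle u', u\rangle / r$.

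The second step is to evaluate $\omega$ on \eqref{eq:uhatprime} using the definition $\omega_p(V_p) = \langle V_p, p\rangle_{\mathbb{C}^n}$ with $p = \hat u(t)$. By linearity of $\omega$ in the tangent vector,
\begin{equation}
\omega\Bigl(\tfrac{d}{dt}\hat u(t)\Bigr) = \Bigl\langle \frac{u'}{r} - \frac{r'}{r^2} u,\ \frac{u}{r}\Bigr\rangle = \frac{1}{r^2}\langle u', u\rangle - \frac{r'}{r^3}\langle u, u\rangle = \frac{\langle u', u\rangle}{r^2} - \frac{r'}{r} .
\end{equation}
Now substitute $r' = \mathrm{Re}\,\langle u', u\rangle / r$ and $r^2 = \langle u, u\rangle$: the expression becomes
\begin{equation}
\frac{\langle u', u\rangle - \mathrm{Re}\,\langle u', u\rangle}{\langle u, u\rangle} = \frac{i\,\mathrm{Im}\,\langle u', u\rangle}{\langle u, u\rangle},
\end{equation}
which is exactly \eqref{eq:connection}.

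There is one genuine thing to check rather than a routine manipulation: that \eqref{eq:uhatprime} really is a tangent vector to $S^{2n-1}$ at $\hat u(t)$, so that applying $\omega_p$ is legitimate. This follows because $\mathrm{Re}\,\langle \tfrac{d}{dt}\hat u, \hat u\rangle = \tfrac12\tfrac{d}{dt}\|\hat u\|^2 = 0$, i.e. $\tfrac{d}{dt}\hat u \in T_{\hat u}S^{2n-1}$, viewing the sphere's tangent space as the real-orthogonal complement of the radial direction; and indeed the final answer lies in $i\mathbb{R}$ as the codomain of $\omega$ demands, which is a consistency check on the computation. I do not anticipate a real obstacle here — the only subtlety is being careful with the $\mathbb{R}$-bilinear versus $\mathbb{C}$-sesquilinear bookkeeping when differentiating $\langle u, u\rangle$ and when splitting $\langle u', u\rangle$ into its real and imaginary parts, and with the convention (conjugate-linear in which slot) for $\langle\cdot,\cdot\rangle_{\mathbb{C}^n}$; the identity \eqref{eq:connection} as written is consistent with conjugate-linearity in the second slot, which I will adopt throughout.
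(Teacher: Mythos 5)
Your proposal is correct and follows essentially the same computation as the paper: differentiate $\hat u = u/\langle u,u\rangle^{1/2}$ by the quotient rule, use $\tfrac{d}{dt}\langle u,u\rangle = 2\,\mathrm{Re}\langle u',u\rangle$, and evaluate the connection form to get $\langle u',u\rangle/\langle u,u\rangle - \mathrm{Re}\langle u',u\rangle/\langle u,u\rangle = i\,\mathrm{Im}\langle u',u\rangle/\langle u,u\rangle$. Your added verification that $\tfrac{d}{dt}\hat u$ lies in $T_{\hat u}S^{2n-1}$ is a small bonus the paper omits.
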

\begin{proof}
By definition
\begin{align*}
\omega\left(\frac{d}{dt} \hat{u}(t)\right) &= \Big\langle \frac{d}{dt} \frac{u(t)}{\big\langle u(t),u(t)\big\rangle^\frac{1}{2}}, \frac{u(t)}{\big\langle u(t),u(t)\big\rangle^\frac{1}{2}} \Big\rangle\\
&= \Big\langle  \frac{u'(t)\langle u(t),u(t)\rangle^\frac{1}{2}}{\langle u(t),u(t)\rangle}  - \frac{ u(t) Re\big(\langle u'(t),u(t)\rangle\big)}{\langle u(t),u(t)\rangle^\frac{3}{2}}, \frac{u(t)}{\langle u(t),u(t)\rangle^\frac{1}{2}} \Big\rangle\\
&=   \frac{\langle u'(t),u(t)\rangle}{\langle u(t), u(t)\rangle} - \frac{ Re\big(\langle u'(t),u(t)\rangle\big)}{\langle u(t),u(t)\rangle}\\
&= i\frac{ Im \big( \langle u'(t), u(t) \rangle\big)}{\langle u(t),u(t)\rangle}  
\end{align*}
which verifies equation (\ref{eq:connection}).
\end{proof}
With the choice of connection, one may measure the parallel translation of a path in the Hopf bundle---the total parallel translation accumulated by a path is referred to as the geometric phase.
\begin{mydef}
Let $\gamma\subset S^{2n-1}$ be a path in the Hopf bundle.  Then the \textbf{geometric phase} of $\gamma$ with respect to the natural connection is the total winding in the fiber given as
\begin{align}
\label{eq:geometricphase}
GP(\gamma) &=\frac{1}{2\pi i} \int_{\gamma} \omega
\end{align}
For a general non-zero path $\gamma$ parametrized by $u(t):[0,1]\rightarrow \mathbb{C}^n$ one may write the geometric phase as
\begin{align}\label{eq:computephase}
GP(\gamma) &=\frac{1}{2\pi} \int^1_0 \frac{Im\big(\langle u'(s), u(s) \rangle\big)}{(\langle u(s),u(s) \rangle)}
\end{align}
\end{mydef}

In his thesis \cite{WAY2009}, Way developed numerical results which calculated the geometric phase of special solutions for the system (\ref{eq:A^2system}).  In particular, he considered solutions which corresponded to the stable manifold of the system $A_{+\infty}(\lambda)$ and, with respect to the equation (\ref{eq:geometricphase}), he calculated their phase with respect to the contour $K$.  The reformulation of the method of geometric phase, as in the proof by Grudzien, Bridges \& Jones \cite{Grudz2015}, is elaborated below:  
\begin{figure}[H]
\center
\textbf{The Method of Geometric Phase}
\begin{itemize}
\item \textbf{Step 1:} Choose a contour $K$ in $\mathbb{C}$ that \textbf{does not} intersect the spectrum of the operator $\mathcal{L}$, and let $\lambda(s):[0,1]\rightarrow K$ be a parametrization of $K$. 
\item \textbf{Step 2:} Varying $\lambda \in K$ define a loop of eigenvectors, $X^\pm(\lambda)$, for the $A^{(2)}_{\pm\infty}(\lambda)$ system (\ref{eq:A^2infty}) where $X^\pm(\lambda)$ corresponds to the dominant eigenvalue of positive real part. 
\item \textbf{Step 3:} Choose $x_0$ ``close'' to $-\infty$ and let $X^-(\lambda,x_0)$ be a loop of initial conditions.
\item \textbf{Step 4:} Integrate the loop of eigenvectors, following $Y'=A^{(2)}(\lambda,x)Y$, to $x_1$ ``close'' to $+\infty$
\item \textbf{Step 5:} Measure the \textbf{relative geometric phase} of $X^-(\lambda,\xi_1)$ and $X^+(\lambda)$, ie: 
\begin{align}\label{eq:Xgeometricphase}
GP\Big(X^-\big(K,x_1\big)\Big) - GP\Big(X^+\big(K\big)\Big)
\end{align}
where $GP(\gamma)$ is defined as in equation (\ref{eq:computephase}). 
\end{itemize}
\end{figure}
\begin{theorem}
Let $X^+(\lambda,\tau)$ be a solution to the system (\ref{eq:A^2system}) which is in the unstable manifold for the asymptotic system $A^{(2)}_{-\infty}(\lambda)$. As $ x_1 \rightarrow \infty$, the relative geometric phase given by (\ref{eq:Xgeometricphase}) converges to the multiplicity of the eigenvalues enclosed by the contour $K$.
\end{theorem}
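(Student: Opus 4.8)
The plan is to reduce the assertion to the classical count of eigenvalues by the winding of the Evans function, and then to show that the relative geometric phase in (\ref{eq:Xgeometricphase}) reproduces that winding number by exploiting how the geometric phase transforms under multiplication by a scalar loop. Concretely, I would first recall, from the construction of Alexander, Gardner \& Jones on the splitting domain $\Omega$, the analytic Evans function $D(\lambda)$ whose zeros, counted with multiplicity, are exactly the eigenvalues of $\mathcal{L}$ in $\Omega$ counted with algebraic multiplicity; by Hypothesis~\ref{hyp:K}, $D$ is nonvanishing on $K$, so the enclosed multiplicity equals $\mathrm{wind}(D\circ\lambda)$, the winding of $D$ about $0$ as $\lambda$ runs over $K$. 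I would then realize this in the compound coordinates of (\ref{eq:A^2system}). Let $\Phi(\lambda,x)$ be the solution of Steps~3--4 lying in the unstable manifold of $A^{(2)}_{-\infty}(\lambda)$, normalized so that $e^{-\sigma^+(\lambda)x}\Phi(\lambda,x)\to X^+(\lambda)$ as $x\to-\infty$. Because $\sigma^+(\lambda)$ is the \emph{simple} eigenvalue of $A^{(2)}_{+\infty}(\lambda)$ of largest real part for every $\lambda\in\Omega$---it is the sum of the two unstable eigenvalues of $A_{\pm\infty}(\lambda)$, which cannot coincide with a stable one---and because $A^{(2)}(\lambda,x)-A^{(2)}_{+\infty}(\lambda)=O(e^{-\mu|x|})$ uniformly on $K$ (the pulse (\ref{eq:HSpulse}) decays exponentially), the theory of exponential dichotomies with parameters gives the expansion
\[
\Phi(\lambda,x)=\tilde c(\lambda)\,e^{\sigma^+(\lambda)x}\big(X^+(\lambda)+\tilde R(\lambda,x)\big),\qquad \sup_{\lambda\in K}\|\tilde R(\lambda,x)\|=O(e^{-\mu x})\quad(x\to+\infty),
\]
in which $\tilde c$ is analytic on $\Omega$ and vanishes there, to the correct order, precisely at the eigenvalues of $\mathcal{L}$; thus $\tilde c$ equals $D$ up to a nonvanishing analytic factor and $\mathrm{wind}(\tilde c\circ\lambda)=\mathrm{wind}(D\circ\lambda)$.

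Next I would use the transformation law for the geometric phase: for any loops $v\colon[0,1]\to\mathbb{C}^6\setminus\{0\}$ and $h\colon[0,1]\to\mathbb{C}\setminus\{0\}$, the identity $\langle(hv)',hv\rangle/\langle hv,hv\rangle=h'/h+\langle v',v\rangle/\langle v,v\rangle$ together with (\ref{eq:computephase}) gives $GP(hv)=\mathrm{wind}(h)+GP(v)$, since $\frac{1}{2\pi}\oint\mathrm{Im}(h'/h)\,ds$ is the winding number of $h$. Applying this with $v(s)=X^+(\lambda(s))+\tilde R(\lambda(s),x_1)$ and $h(s)=\tilde c(\lambda(s))\,e^{\sigma^+(\lambda(s))x_1}$---so that $hv=\Phi(\cdot,x_1)$---and noting that $x_1$ is real and $\sigma^+$ is single-valued along the loop, so $\mathrm{wind}\big(e^{\sigma^+(\lambda(\cdot))x_1}\big)=\frac{x_1}{2\pi}\,\Delta_K\,\mathrm{Im}\,\sigma^+=0$, I obtain $\mathrm{wind}(h)=\mathrm{wind}(\tilde c\circ\lambda)=\#\{\text{eigenvalues in }K^{\circ}\}$ for every $x_1$, hence
\[
GP\big(\Phi(K,x_1)\big)-GP\big(X^+(K)\big)=\#\{\text{eigenvalues in }K^{\circ}\}+\Big(GP\big(X^+(K)+\tilde R(\cdot,x_1)\big)-GP\big(X^+(K)\big)\Big).
\]

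To finish I would send the bracketed error to $0$. Over the common denominator $\langle X^++\tilde R,X^++\tilde R\rangle$, which tends uniformly on $K$ to $\langle X^+,X^+\rangle\ge c>0$ because $\|\tilde R\|_\infty\to0$, the numerator of $GP(X^++\tilde R)$ differs from that of $GP(X^+)$ only by the terms $\mathrm{Im}\langle\partial_s X^+,\tilde R\rangle$, $\mathrm{Im}\langle\partial_s\tilde R,X^+\rangle$ and $\mathrm{Im}\langle\partial_s\tilde R,\tilde R\rangle$; the first is $O(\|\tilde R\|_\infty)$, and the last two I integrate by parts around the loop, $\oint\mathrm{Im}\langle\partial_s\tilde R,w\rangle\,ds=-\oint\mathrm{Im}\langle\tilde R,\partial_s w\rangle\,ds$, which converts them into $O(\|\tilde R\|_\infty)$ and $O\big(\|\tilde R\|_\infty\,\|\partial_s\tilde R\|_{L^1(K)}\big)$. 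It then suffices that $\|\partial_\lambda\tilde R(\cdot,x_1)\|_{L^1(K)}$ grow at most polynomially in $x_1$ while $\|\tilde R\|_\infty$ decays exponentially, which follows from the $\lambda$-variational equation of (\ref{eq:A^2system}) with its uniformly $O(e^{-\mu x})$ forcing; hence the bracket tends to $0$ and $GP(\Phi(K,x_1))-GP(X^+(K))\to\#\{\text{eigenvalues in }K^{\circ}\}$, as claimed.

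The \textbf{main obstacle} is the parameter-dependent asymptotics used above: the uniform-on-$K$ decay of $\tilde R$ together with a polynomial bound on $\partial_\lambda\tilde R$, and---above all---the identification of the leading coefficient $\tilde c(\lambda)$ with the Evans function, so that its winding is the eigenvalue count with algebraic multiplicity. This is precisely the exponential-dichotomy input underlying the Evans-function/determinant-bundle correspondence of Alexander, Gardner \& Jones, here specialized to the explicit system (\ref{eq:A^2system})--(\ref{eq:A^2infty}); once it is granted, the geometric-phase bookkeeping is routine. One should also confirm that $\sigma^+(\lambda)=\sqrt{2}\sqrt{\eta+\sqrt{\eta^2+p^2}}$ and the eigenvector loop $X^+(\lambda)$ may be chosen single-valued along $K$---that is, that $K$ does not encircle the branch points $\eta^2+p^2=0$ or $p=0$, which lie on the edge of the essential spectrum that $K$ already avoids by Hypothesis~\ref{hyp:K}.
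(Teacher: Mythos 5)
The paper does not actually argue this theorem: its ``proof'' is a pointer to the general result of Grudzien, Bridges \& Jones \cite{Grudz2015}. Your proposal reconstructs, in essentially the right form, the mechanism of that reference: factor the evolved unstable-manifold solution at large $x_1$ as a scalar loop $h(s)=\tilde c(\lambda(s))e^{\sigma^+(\lambda(s))x_1}$ times a vector loop tending to $X^+(K)$, use the additivity $GP(hv)=\mathrm{wind}(h)+GP(v)$ of the phase under multiplication by a nonvanishing scalar loop, identify $\tilde c$ with the Evans function of Alexander, Gardner \& Jones \cite{AGJ1990} up to a nonvanishing analytic factor so that $\mathrm{wind}(\tilde c\circ\lambda)$ equals the enclosed multiplicity, and show the remainder contributes nothing in the limit. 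So your route agrees with the (outsourced) proof rather than departing from it. Two points need tightening. First, the error estimate does not close as written: $GP(X^++\tilde R)$ and $GP(X^+)$ have different denominators, and once you place the integrands over a common denominator the integration by parts you invoke generates a term in which $\partial_s\tilde R$ is paired with $X^+$ through the $s$-derivative of $\langle X^++\tilde R,X^++\tilde R\rangle$; that term is not removed by the by-parts trick, so you genuinely need convergence of the remainder in $C^1(K)$, i.e.\ a uniform decay rate for $\partial_\lambda\tilde R(\cdot,x_1)$, not merely the sup-norm decay of $\tilde R$. This is exactly the exponential-dichotomy input you flag as the main obstacle, and it is available here because $K$ lies in the splitting region $\Omega$, where $\sigma^+=\mu_1+\mu_2$ (the sum of the two unstable eigenvalues of $A_{\infty}(\lambda)$) is the strictly dominant, simple eigenvalue of $A^{(2)}_{\infty}(\lambda)$ with one-dimensional eigenspace $\Lambda^2$ of the unstable subspace; but it should be stated and used explicitly rather than left as an afterthought. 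Second, your worry about single-valuedness of $\sigma^+$ and $X^+(\lambda)$ along $K$ is already discharged by the splitting lemma: strict dominance of $\sigma^+$ on $\Omega\supset K$ rules out the degeneracies of the explicit square-root formula, so the eigenvector loop is analytic and single-valued there.
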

\begin{proof}
This theorem is proven in a general formulation by Grudzien, Bridges \& Jones \cite{Grudz2015} and the reader is referred there for a discussion of the general method.
\end{proof}
\begin{remark}
In the above case where $A_{-\infty}(\lambda) \equiv A_{\infty}(\lambda) \equiv A_{+\infty}(\lambda)$, 
one may take $X^-(\lambda) \equiv X^+(\lambda)$ so that the formulation in equation (\ref{eq:Xgeometricphase}) is the difference of the geometric phase of the evolved solution and its initial condition---numerically this is given by
\begin{align}
GP\big(X^-(\lambda,x_1)\big) - GP\big(X^-(\lambda,x_0)\big)
\end{align}
\end{remark}

Note that the relative geometric phase need not be an integer, and indeed cannot always be an integer value, as can be seen directly in the proof of the method of geometric phase in \cite{Grudz2015}.  The phase calculation must be continuous in $x\in (-\infty,\infty)$ by the properties of the flow.  Moreover, when $X^-(\lambda) = X^+(\lambda)$ the relative geometric phase will always start at zero and transition to the multiplicity of the eigenvalue, as demonstrated in the following examples.

\section{Numerical Results}
\label{section:numerics}
This section describes the implementation of the numerical method and demonstrates the transition of the geometric phase.  In many Evans function formulations, solutions corresponding to the un/stable manifolds for the systems $A_{\mp\infty}(\lambda)$ are integrated to some matching value, usually $x=0$ where the winding of the Evans function is calculated \cite{Jones1984}.  When computing the winding of the Evans function with the geometric phase in the Hopf bundle, one considers the unstable \textit{or} stable manifold and computes the geometric phase accumulated relative to the opposite asymptotic condition.  The accumulation of the geometric phase thus introduces a new dependence on $x_1$ where $x_1$ is the final $x$ value of the forward integration described in the method of the geometric phase above.  As in the examples shown by Grudzien, Bridges \& Jones \cite{Grudz2015}, the geometric phase undergoes a transition as $x_1$ grows large, but for the computation of eigenvalues of the linearization of the complex Ginzburg-Landau equation about the Hocking-Stewartson pulse below, the transition isn't uniform across the eigenvalues or the integration parameter.  

In each example below, the contour $K$ is chosen to be the circle of radius $.1$ about $\lambda_0$ where $\lambda_0\in\{0,15,-6.6537\}$.  The contour is discretized into $10,000$ even steps, and for each fixed $\lambda$ in the discretization of $K$, the the unstable eigenvector (\ref{eq:ueigenvector}) is integrated from $x_0=-10$ forward to some $x_1$.  The Matlab ODE45 solver is used to find the trajectory of the initial condition $X^+(\lambda,x_0)$ with respect to the system (\ref{eq:A^2system}), and the trajectory is stored at step sizes of $.04$ in $x$.  To compute the relative phase in equation (\ref{eq:Xgeometricphase}), the derivative of a path $\frac{d}{ds}V(\lambda(s))$ is approximated via the difference equation  
\begin{align*}
\frac{V\left(\lambda(s)+\delta s\right) - V\left(\lambda(s)-\delta s\right)}{2\delta s}.
\end{align*}
From the connection equation (\ref{eq:connection}), the geometric phase of $X^+(\lambda)$ and $X^-(\lambda,x_1)$ is computed with the Euler method. The difference of phases in equation (\ref{eq:Xgeometricphase}) is computed for each stored value of $x$ and the plot of the accumulated geometric phase $X^-(\lambda,x_1)$ is given for each of the three contours below---because the system is symmetric, the \textbf{relative} geometric phase is describe by subtracting the \textbf{initial} geometric phase from the \textbf{terminal} geometric phase.  However, in each of these examples the initial geometric phase is approximately zero, so that the terminal value in the phase plot is approximately eigenvalue multiplicity once the transition has terminated.  

The first figure demonstrates the phase transition for the simple eigenvalue at $\lambda_0 = 15$.  This figure describes a relatively uniform phase transition, exhibited for other examples demonstrated by Grudzien, Bridges \& Jones \cite{Grudz2015}.

\begin{figure}[H]
\center
\includegraphics[width=\linewidth]{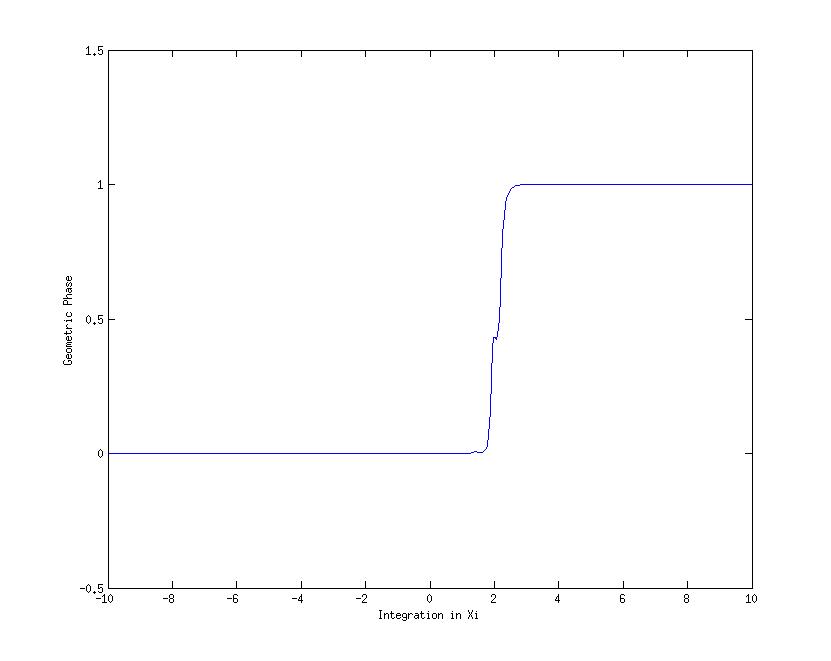}
\caption{The phase transition plotted for the simple eigenvalue at $\lambda\approx 15$}
\end{figure}

However, the other two plots for $\lambda_0 \in \{0,-6.6357\}$ demonstrate a non-uniform transition both in terms of the fluctuation in the phase calculation, as well as the value of $x$ for which the transition begins.
\begin{figure}[H]
\center
\includegraphics[width=\linewidth]{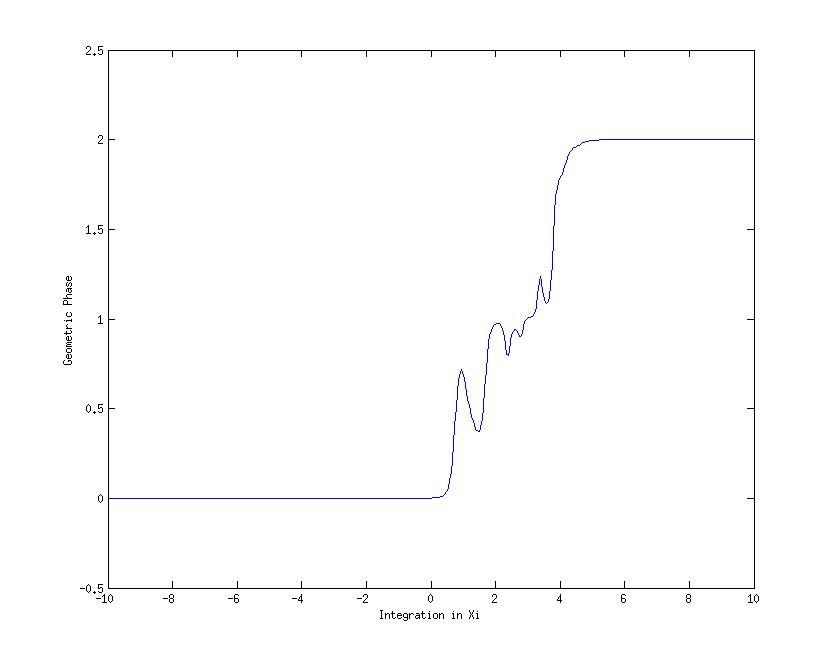}
\caption{The phase transition plotted for the double eigenvalue at $\lambda =0$.}
\end{figure}

\begin{figure}[H]
\center
\includegraphics[width=\linewidth]{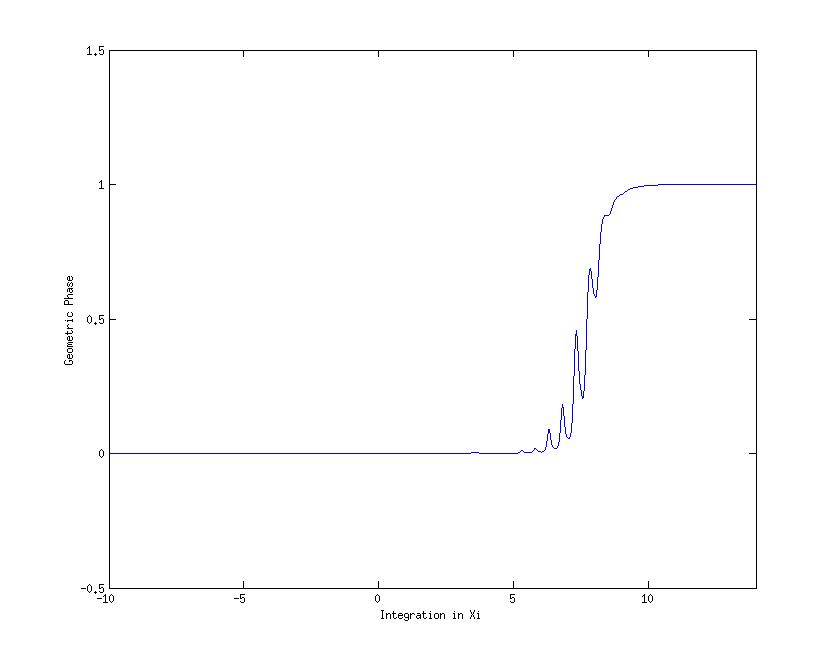}
\caption{The phase transition plotted for the simple eigenvalue at $\lambda\approx -6.6357$}
\end{figure}

The scale in $x$ direction for the plot of the phase transition at $\lambda_0 = -6.6357$ is longer, ending at $x_1 = 14$.  Noticeably, the transition here begins later, and doesn't terminate until it is nearly at the end of the other plots, at $x_1\approx 10$.  This example in particular highlights the importance of understanding the phase transition for applications.
\section{Discussion}
Evans function calculations are often useful as a stability index \cite{AGJ1990}, describing the multiplicity of eigenvalues of positive real part by computing the winding of the Evans function along the imaginary axis, and bounding the integral of the winding along a semi-circle of radius $r$, as $r\rightarrow\infty$.  In particular, in order to utilize the method of calculating the winding with the geometric phase in the Hopf bundle, it will be critical to understand the nature of the phase transition.  As demonstrated in the example above, the phase transition is neither uniform in the integration of the $x$ direction, nor uniform across eigenvalues---indeed the calculation may fluctuate and the initiation and termination of the transition differs for each of the above results.  For utilization as a stability index, one must understand the relationship between the transition and the underlying steady state to efficiently compute the eigenvalues.  The geometric phase must eventually converge to the multiplicity of the eigenvalues enclosed by the contour, but a theoretical understanding of the transition of the phase will be an important development for both the numerical method and the understanding the eigenvalue problem itself---indeed the method of the geometric phase offers a unique insight into the continuous accumulation of the eigenvalue as driven by the system dynamics, a new insight not afforded by other Evans function methods.

Currently the method of geometric phase is limited by the dependence on the exterior algebra formulation---for usual systems on $\mathbb{C}^n$, where the stable and unstable manifolds are of dimension approximately $\frac{n}{2}$, the dimension of phase space for the exterior algebra grows approximately exponentially in $n$, as discussed by Humpherys \& Zumbrun \cite{Hump2008}. 
 However, the fact that the method of geometric phase relies only on \textit{either} the unstable or stable manifold for the eigenvalue calculation highlights the potential for future reductions.
 Reducing the calculation of the phase to a \textit{frame of solutions} spanning the un/stable manifold, rather than the wedge product, is the subject of active research.
\section{Conclusion}
Way developed numerical results for the calculation of the geometric phase for solutions in the stable manifold of the $A^{(2)}_{-\infty}(\lambda)$ system for the linearization of the complex Ginzburg-Landau equation about the Hocking-Stewartson pulse, in his thesis \cite{WAY2009}.  This work builds on that discussion, framing the numerical method in terms of the relative phase as in the proof of the method of geometric phase by Grudzien, Bridges \& Jones \cite{Grudz2015}, as well as demonstrating the phase transition in the calculation with new numerical results.  In studying the numerical method, understanding the phase transition and its relationship to the underlying wave will be critically important for finite approximations and particularly for use of the geometric phase for stability indices.  The varied nature of the phase transition across eigenvalues for a single example highlights the need to understand this transition, and this work opens new theoretical questions for the advancement of Evans function techniques, and the method of geometric phase. 
\section{Acknowledgements}
This work benefited from the support of NSF SAVI award DMS-0940363, MURI award A100752 and GCIS award DMS-1312906.
\section*{References}
\bibliography{biblio}
\end{document}